\DeclareSymbolFont{cyrletters}{OT2}{wncyr}{m}{n}
\DeclareMathSymbol{\Sha}{\mathalpha}{cyrletters}{"58}
\newtheorem{theorem}{Theorem}[section]
\newtheorem{lemma}[theorem]{Lemma}
\newtheorem{corollary}[theorem]{Corollary}
\theoremstyle{definition}
\newtheorem{definition}[theorem]{Definition}
\newtheorem{example}[theorem]{Example}
\theoremstyle{remark}
\newtheorem{remark}[theorem]{Remark}
\numberwithin{equation}{section}
\begin{document}

\title[Shafarevich-Tate groups]{Shafarevich-Tate groups of abelian varieties}

\author[Nikolaev]{{Igor ~V.~Nikolaev}}
\address{Department of Mathematics and Computer Science, 
St.~John's University, 8000 Utopia Parkway,  
New York,  New York 11439}
\email{igor.v.nikolaev@gmail.com}


\subjclass[2020]{Primary 11G10; Secondary 46L85}
\date{January 1, 1994 and, in revised form, June 22, 1994.}


\keywords{Abelian varieties, non-commutative tori}

\begin{abstract}
The Shafarevich-Tate group $\Sha (\mathscr{A})$ measures the failure of the 
Hasse principle for an abelian variety $\mathscr{A}$. 
Using a correspondence between the abelian varieties and the higher dimensional non-commutative
tori,  we prove that $\Sha (\mathscr{A})\cong Cl~(\Lambda)\oplus Cl~(\Lambda)$
or  $\Sha (\mathscr{A})\cong \left(\mathbf{Z}/2^k\mathbf{Z}\right) 
\oplus Cl_{~\mathbf{odd}}~(\Lambda)\oplus  Cl_{~\mathbf{odd}}~(\Lambda)$, 
where  $Cl~(\Lambda)$ is the ideal class group of a ring  $\Lambda$ 
 associated to the K-theory of the non-commutative tori  and $2^k $ divides the order of $Cl~(\Lambda)$. 
 The case of elliptic curves with complex multiplication is considered in detail. 
\end{abstract}

\maketitle

\section{Introduction}
The study of Diophantine equations is the oldest part of mathematics. 
If such an equation has an integer solution, then using the reduction
modulo any  prime $p$, one gets a solution of the 
 equation lying in the finite field $\mathbf{F}_p$ and  a solution 
in the field of real numbers  $\mathbf{R}$. The equation is said to satisfy
the Hasse principle, if the converse  is true.  
For instance,  the quadratic equations  satisfy the Hasse principle,
while  the equation $x^4-17=2y^2$ has a solution over $\mathbf{R}$ 
and  each  $\mathbf{F}_p$,   but no rational solutions.
Measuring the failure of  the Hasse principle is a difficult open 
problem of number theory.

Let $\mathscr{A}_K$ be an abelian variety over the number field $K$
which we assume to be simple, i.e. the $\mathscr{A}_K$ has no proper sub-abelian varieties 
over $K$. 
Denote by $K_v$ the completion of $K$ at the (finite or infinite) place $v$. 
Consider the Weil-Ch\^atelet group $WC(\mathscr{A}_K)$ of the abelian 
variety $\mathscr{A}_K$ and the group homomorphism:
\begin{equation}\label{eq1.1}
WC(\mathscr{A}_K)\rightarrow \prod_v WC(\mathscr{A}_{K_v}). 
\end{equation}
The Shafarevich-Tate group $\Sha (\mathscr{A}_K)$ of 
$\mathscr{A}_K$ is defined as the kernel of homomorphism (\ref{eq1.1}). 
The  variety $\mathscr{A}_K$ satisfies the Hasse principle, 
if and only if,  the group $\Sha (\mathscr{A}_K)$ is trivial. 
Little is known about the $\Sha (\mathscr{A}_K)$ in general. 
The existing methods include  an evaluation of the analytic order of
$\Sha (\mathscr{A}_K)$ based on the second part of the Birch and Swinnerton-Dyer Conjecture 
and an exact calculation of  the  $p$-part of $\Sha (\mathscr{A}_K)$,
see e.g. [Rubin 1989] \cite{Rub1}.

The aim of our note is calculation of the group $\Sha (\mathscr{A}_K)$
based on a functor  $F$ between the $n$-dimensional abelian varieties $\mathscr{A}_K$
and the $2n$-dimensional non-commutative tori  $\mathcal{A}_{\Theta}$,  i.e. the $C^*$-algebras
  generated by the unitary operators $U_1,\dots, U_{2n}$ satisfying the commutation 
relations $\{U_jU_i=e^{2\pi i\theta_{ij}} U_iU_j ~|~ 1\le i,j\le 2n\}$
described  by a skew-symmetric matrix
\begin{equation}\label{eq1.2} 
\Theta=\left(
\begin{matrix} 0              & \theta_{12}  & \dots & \theta_{1, 2n}\cr
             -\theta_{12} & 0  & \dots & \theta_{2, 2n}\cr
              \vdots         & \vdots         & \ddots   &\vdots\cr
             -\theta_{1, 2n} & -\theta_{2, 2n} & \dots & 0 
             \end{matrix}
              \right)\in GL_{2n}(\mathbf{R}). 
\end{equation}
The functor maps  isomorphic abelian varieties $\mathscr{A}$ and $\mathscr{A}'$
to the Morita equivalent  algebras  $\mathcal{A}_{\Theta}=F(\mathscr{A})$
and $\mathcal{A}_{\Theta}'=F(\mathscr{A}')$, see e.g. \cite[Section 1.3]{N}.  
Restricting  $F$ to the simple abelian varieties $\mathscr{A}_K$,  one gets  the non-commutative
tori $\mathcal{A}_{\Theta (k)}=F(\mathscr{A}_K)$, where $\Theta (k)$ is the matrix (\ref{eq1.2})  
defined over  a number field  $k\subset \mathbf{R}$.

\smallskip
Roughly speaking, the idea is this. To calculate the $\Sha (\mathscr{A}_K)$,
we recast   (\ref{eq1.1}) in terms of  the K-theory
of  algebra  $\mathcal{A}_{\Theta (k)}=F(\mathscr{A}_K)$. 
Recall that an isomorphism class of $\mathcal{A}_{\Theta (k)}$ is defined 
by the triple $\left(K_0(\mathcal{A}_{\Theta (k)}), K_0^+(\mathcal{A}_{\Theta (k)}),
\Sigma(\mathcal{A}_{\Theta (k)}\right)$ consisting of the $K_0$-group, the positive cone $K_0^+$ and 
the scale $\Sigma$ of   the algebra $\mathcal{A}_{\Theta (k)}$ [Blackadar 1986] \cite[Section 6]{B},
see also Section 2.3.   
It is proved,  that  $\Sigma(\mathcal{A}_{\Theta (k)})$ is  a torsion group, such that  
$WC(\mathscr{A}_K)\cong\Sigma(\mathcal{A}_{\Theta (k)})$ (lemma \ref{lm3.1}). 
The RHS  of (\ref{eq1.1}) corresponds to the crossed product $C^*$-algebra
$\mathcal{A}_{\Theta (k)}\rtimes_{L_v}\mathbf{Z}$,  where the notation is explained in Section 2.4.2.  
It is proved,  that $\prod_v WC(\mathscr{A}_{K_v})\cong \prod_v K_0\left(\mathcal{A}_{\Theta (k)}\rtimes_{L_v}\mathbf{Z}\right)$
(corollary \ref{cor3.2}).  Thus (\ref{eq1.1}) can
be written in the form:
\begin{equation}\label{eq1.3}
\Sigma(\mathcal{A}_{\Theta (k)})\to  \prod_v K_0\left(\mathcal{A}_{\Theta (k)}\rtimes_{L_v}\mathbf{Z}\right).
\end{equation}

\smallskip
Both sides of (\ref{eq1.3}) are functions of a single positive matrix $B\in GL(2n, \mathbf{Z})$, see definition \ref{dfn2.1}.   
However,  the LHS of  (\ref{eq1.3})  depends on the similarity class of 
$B$, while the RHS of (\ref{eq1.3})  depends   on the characteristic polynomial  of $B$. 
This observation is critical, since it puts the  elements of  $\Sha (\mathscr{A}_K)$
into a one-to-one correspondence with the similarity classes of matrices having the same
characteristic polynomial.  The latter is an old problem of linear algebra and 
it is  known,  that the number of  such classes  is finite.  
They  correspond to  the ideal classes $Cl~(\Lambda)$ of an order   $\Lambda$ in  the field 
$k=\mathbf{Q}(\lambda_B)$, where  $\lambda_B$  is  the Perron-Frobenius eigenvalue of matrix $B$ [Latimer \& MacDuffee 1933] \cite{LatMac1}.  
Let  $Cl~(\Lambda)\cong \left(\mathbf{Z}/2^k\mathbf{Z} \right) \oplus Cl_{~\mathbf{odd}}(\Lambda)$ for $k\ge 0$. 
Using the Atiyah pairing between the K-theory and   the  K-homology, one gets   the following result.  
\begin{theorem}\label{thm1.1}
The Shafarevich-Tate group of an abelian variety $\mathscr{A}_K$
is a finite group given by the formulas:
\begin{equation}
\Sha (\mathscr{A}_K)\cong
\begin{cases}
Cl~(\Lambda)\oplus Cl~(\Lambda), & \mbox{if $k$ is even,}  \cr
\left(\mathbf{Z}/2^k\mathbf{Z}\right) \oplus Cl_{~\mathbf{odd}}~(\Lambda)\oplus  Cl_{~\mathbf{odd}}~(\Lambda),  & \mbox{if $k$ is odd.} 
\end{cases}
\end{equation}
\end{theorem}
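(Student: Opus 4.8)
The plan is to read off $\Sha (\mathscr{A}_K)$ from the kernel description (\ref{eq1.3}) and then to pin down its group structure by means of the Atiyah index pairing on the K-theory of $\mathcal{A}_{\Theta (k)}$.

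First I would fix the set-theoretic skeleton. By Lemma~\ref{lm3.1} and Corollary~\ref{cor3.2} the homomorphism (\ref{eq1.1}) is identified with (\ref{eq1.3}), so that $\Sha (\mathscr{A}_K)$ is the kernel of $\Sigma(\mathcal{A}_{\Theta (k)})\to \prod_v K_0\left(\mathcal{A}_{\Theta (k)}\rtimes_{L_v}\mathbf{Z}\right)$. The left-hand group is an invariant of the $GL_{2n}(\mathbf{Z})$-similarity class of the positive matrix $B$ of Definition~\ref{dfn2.1}, whereas each factor on the right depends only on the characteristic polynomial of $B$; hence the kernel is parametrized by the similarity classes of integral matrices sharing the characteristic polynomial of $B$. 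By the Latimer--MacDuffee theorem \cite{LatMac1} these classes are in bijection with the ideal class group $Cl~(\Lambda)$ of the order $\Lambda=\mathbf{Z}[\lambda_B]\subset k$. This produces the ``core'' abelian group $Cl~(\Lambda)$ governing $\Sha (\mathscr{A}_K)$.

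Next I would introduce the Atiyah index pairing $K_0(\mathcal{A}_{\Theta (k)})\times K^0(\mathcal{A}_{\Theta (k)})\to\mathbf{Z}$. The torsion subgroups of the two K-groups are each identified with $Cl~(\Lambda)$ under the dictionary above, and restricting the unimodular index pairing to torsion yields a nondegenerate $\mathbf{Q}/\mathbf{Z}$-valued pairing $b$ together with its diagonal quadratic refinement $q(x)=b(x,x)$. Realizing $\Sha (\mathscr{A}_K)$ as the discriminant group attached to $b$, one sees that the contribution of every odd prime is automatically hyperbolic, reproducing the summand $Cl_{~\mathbf{odd}}~(\Lambda)\oplus Cl_{~\mathbf{odd}}~(\Lambda)$ regardless of the behaviour at $2$.

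The decisive and hardest step is the $2$-primary computation. Writing $2^k$ for the order of the (cyclic) $2$-Sylow subgroup of $Cl~(\Lambda)$, I would evaluate the refinement $q$ on a generator of this cyclic part: the resulting index is a Gauss-sum type quantity whose class is controlled by the relevant Hilbert symbol, equivalently by the residue $(-1)^k$. When $k$ is even the local form at $2$ is of split type and the $2$-part of $\Sha (\mathscr{A}_K)$ is the hyperbolic group $(\mathbf{Z}/2^k\mathbf{Z})\oplus(\mathbf{Z}/2^k\mathbf{Z})$; when $k$ is odd the local form is of non-split type and the $2$-part reduces to the single cyclic summand $\mathbf{Z}/2^k\mathbf{Z}$. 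This is precisely the passage of $|\Sha (\mathscr{A}_K)|$ from a perfect square to twice a square, in harmony with the Cassels--Tate/Poonen--Stoll dichotomy; assembling the $2$-part with the odd part yields the two cases of the theorem. I expect the genuine obstacle to be this $2$-adic analysis: making the refinement $q$ explicit enough to read off its type as a function of $k$, and proving that the discriminant $2$-group drops from rank two to rank one exactly when $k$ is odd.
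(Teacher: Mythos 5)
Your overall route coincides with the paper's: Lemma~\ref{lm3.1} and Corollary~\ref{cor3.2} convert (\ref{eq1.1}) into (\ref{eq1.3}), the observation that the left side depends on the similarity class of $B$ while the right side depends only on $Char~(B)$ reduces the kernel to the Latimer--MacDuffee count, and the Atiyah pairing between K-theory and K-homology is what produces the doubling $Cl~(\Lambda)\oplus Cl~(\Lambda)$ with a special role for the prime $2$. Two steps, however, are left at the level of intention rather than proof. First, you pass from ``the kernel is parametrized by the similarity classes'' to ``the kernel is the group $Cl~(\Lambda)$'' without the intermediate counting argument: the paper orders the modules $\Lambda_1\subset\dots\subset\Lambda_h$, deduces the nested scales (\ref{eq3.9}), and argues that each layer $\Sigma_i\setminus\Sigma_{i-1}$ contributes exactly one element to the preimage of the trivial element on the right-hand side of (\ref{eq3.8}) (otherwise the variety would admit two distinct reductions modulo $p$); this is where the kernel is shown to have order $h=|Cl~(\Lambda)|$ rather than being merely indexed by a subset of the classes. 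Some version of this step is needed, since a priori the kernel could be a proper subgroup of the set you have parametrized.

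Second, and more seriously, the decisive $2$-primary computation is explicitly deferred: you announce a Gauss-sum/Hilbert-symbol evaluation of the quadratic refinement $q$ whose outcome is ``controlled by $(-1)^k$,'' and you identify carrying this out as the genuine obstacle. That evaluation \emph{is} the content of the case distinction in the theorem, so as written the proposal does not establish it. The paper's own mechanism is different and more elementary: the embedding (\ref{eq3.10}) is a group homomorphism if and only if the induced form $Q_{\mathbf{F}_p}$ is alternating; alternating forms exist only for $p\ne 2$ (over $\mathbf{F}_2$ any such form is symmetric with $Q_{\mathbf{F}_2}(x,x)\ne 0$ for $x\ne 0$), so the Atiyah-pairing doubling fails at $p=2$, and the Basis Theorem for finite abelian groups then yields the two cases. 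Whether your Hilbert-symbol computation would reproduce the same criterion is plausible but unverified; in particular, nothing in your setup yet explains why the answer should depend on the parity of the exponent $k$ in $2^k$ rather than on some other invariant of $\Lambda$, which is precisely the dichotomy you are asked to prove.
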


\medskip
The article is organized as follows. Preliminary facts can be found 
in Section 2.  The proof of theorem \ref{thm1.1} is  given in Section 3.  
As an illustration, we consider the abelian varieties with complex multiplication 
in Section 4.

\section{Preliminaries}
We briefly review the abelian varieties,  the Shafarevich-Tate groups  and the 
$n$-dimensional non-commutative tori.  For a detailed exposition,  we refer the reader to 
[Mumford 1983] \cite[Chapter 2, \S 4]{M},     [Cassels 1966] \cite{Cas1},
[Blackadar 1986] \cite[Section 6]{B}
and \cite[Section 3.4.1]{N}, respectively.

\subsection{Abelian varieties}
Let $\mathbf{C}^n$ be the $n$-dimensional complex space and $\Lambda$ be
a lattice in the underlying $2n$-dimensional real space $\mathbf{R}^{2n}$. 
The complex torus $\mathbf{C}^n/\Lambda$ is said to be an $n$-dimensional 
(principally polarized) abelian variety $\mathscr{A}$, if it admits an embedding 
into a projective space. In other words, the set $\mathscr{A}$ is both an additive 
abelian group and a complex projective variety. 
Recall that  the Siegel upper half-space $\mathbb{H}_n$ consists of the  symmetric 
$n\times n$ matrices  with complex entries $\tau_i$, such that: 
$\mathbb{H}_n:=\left\{\tau=(\tau_i)\in \mathbf{C}^{{n(n+1)\over 2}}~|~ \Im (\tau_i)>0\right\}$.
 The points of $\mathbb{H}_n$ parametrize 
  the  $n$-dimensional abelian varieties., i.e.  $\mathscr{A}\cong \mathscr{A}_{\tau}$
  for some $\tau\in \mathbb{H}_n$.
Denote by $Sp ~(2n, \mathbf{R})$ the $2n$-dimensional symplectic group,
i.e. a subgroup of the linear group defined by the equation: 
\begin{equation}\label{eq2.1}
\left(
\begin{matrix}
A & B\cr C & D
\end{matrix}
\right)^T
\left(
\begin{matrix}
0 & I\cr -I & 0
\end{matrix}
\right)
\left(
\begin{matrix}
A & B\cr C & D
\end{matrix}
\right)
=
\left(
\begin{matrix}
0 & I\cr -I & 0
\end{matrix}
\right),
\end{equation}
where  $A,B,C, D$ are  the  $n\times n$ matrices with real entries and $I$ is the identity matrix. 
It is well known, that  the abelian varieties $\mathscr{A}_{\tau}$ and $\mathscr{A}_{\tau'}$
are isomorphic, whenever
\begin{equation}\label{eq2.2}
\tau'={A\tau+B\over C\tau+D},  \quad\hbox{where}\quad  
\left(
\begin{matrix}
A & B\cr C & D
\end{matrix}
\right)\in Sp~(2n,\mathbf{Z}). 
\end{equation}

\subsection{Shafarevich-Tate group}
Let $K\subset\mathbf{C}$ be a number field and let $\mathscr{A}_K$ be an 
abelian variety over $K$.   
The principal homogeneous space of $\mathscr{A}_K$ is a variety $\mathscr{C}$
over $K$ with a map $\mu: \mathscr{C}\times \mathscr{A}_K\to\mathscr{C}$
satisfying (i) $\mu(x,0)=x$, (ii) $\mu(x,a+b)=\mu(\mu(x,a),b)$ for all $x\in\mathscr{C}$
and $a,b\in\mathscr{A}_K$ and (iii) for all $x\in\mathscr{C}$ the map $a\mapsto\mu(x,a)$ 
is a $K$-isomorphism between $\mathscr{A}_K$ and $\mathscr{C}$.  
The homogeneous spaces $(\mathscr{C},\mu)$ and $(\mathscr{C}',\mu')$ 
are equivalent, if there is a $K$-isomorphism $\mathscr{C}\to\mathscr{C}'$ 
compatible with the maps $\mu$ and $\mu'$. 
If $\mathscr{C}$ has $K$-points, then the equivalence class  $(\mathscr{C},\mu)$
is said to be trivial. The non-trivial classes  $(\mathscr{C},\mu)$ correspond 
to the varieties $\mathscr{C}$ without $K$-points, but with the $L$-points,
where $L$ is a Galois extension of $K$.  The Weil-Ch\^atelet group 
$WC(\mathscr{A}_K)$ is an additive abelian group of the equivalence 
classes   $(\mathscr{C},\mu)$, where  the trivial class corresponds to the 
zero element of the group,  see  [Cassels 1966] \cite{Cas1} for the details.  
The $WC(\mathscr{A}_K)$  is a torsion group, i.e. each element of  the group has finite order. 
It is not hard to see, that if $K\subset K_v$ is an extension of $K$  by completion at the place $v$,
then there exists a natural group homomorphism:
\begin{equation}\label{eq2.3}
WC(\mathscr{A}_K)\to WC(\mathscr{A}_{K_v}). 
\end{equation}
The kernel of (\ref{eq2.3}) consists of those homogeneous spaces
 having  the $K_v$-points, but no $K$-points. 
  The Shafarevich-Tate group $\Sha (\mathscr{A}_K)$ of 
$\mathscr{A}_K$ is defined as the kernel of homomorphism
$WC(\mathscr{A}_K)\rightarrow \prod_v WC(\mathscr{A}_{K_v})$. 
Thus the abelian variety $\mathscr{A}_K$ satisfies the Hasse principle,  
if and only if,  the group $\Sha (\mathscr{A}_K)$ is trivial.

\subsection{K-theory of $C^*$-algebras}
The  $C^*$-algebra $\mathcal{A}$ is an algebra over $\mathbf{C}$ with a norm
$a\mapsto ||a||$ and an involution $a\mapsto a^*$ such that
it is complete with respect to the norm and $||ab||\le ||a||~ ||b||$
and $||a^*a||=||a||^2$ for all $a,b\in \mathcal{A}$.
Any commutative $C^*$-algebra is  isomorphic
to the algebra $C_0(X)$ of continuous complex-valued
functions on some locally compact Hausdorff space $X$. 
Any other  algebra $\mathcal{A}$ can be thought of as  a non-commutative  
topological space.   By $M_{\infty}(\mathcal{A})$ 
one understands the algebraic direct limit of the $C^*$-algebras 
$M_n(\mathcal{A})$ under the embeddings $a\mapsto ~\mathbf{diag} (a,0)$. 
The direct limit $M_{\infty}(\mathcal{A})$  can be thought of as the $C^*$-algebra 
of infinite-dimensional matrices whose entries are all zero except for a finite number of the
non-zero entries taken from the $C^*$-algebra $\mathcal{A}$.
Two projections $p,q\in M_{\infty}(\mathcal{A})$ are equivalent, if there exists 
an element $v\in M_{\infty}(\mathcal{A})$,  such that $p=v^*v$ and $q=vv^*$. 
The equivalence class of projection $p$ is denoted by $[p]$.   
We write $V(\mathcal{A})$ to denote all equivalence classes of 
projections in the $C^*$-algebra $M_{\infty}(\mathcal{A})$, i.e.
$V(\mathcal{A}):=\{[p] ~:~ p=p^*=p^2\in M_{\infty}(\mathcal{A})\}$. 
The set $V(\mathcal{A})$ has the natural structure of an abelian 
semi-group with the addition operation defined by the formula 
$[p]+[q]:=\mathbf{diag}(p,q)=[p'\oplus q']$, where $p'\sim p, ~q'\sim q$ 
and $p'\perp q'$.  The identity of the semi-group $V(\mathcal{A})$ 
is given by $[0]$, where $0$ is the zero projection. 
By the $K_0$-group $K_0(\mathcal{A})$ of the unital $C^*$-algebra $\mathcal{A}$
one understands the Grothendieck group of the abelian semi-group
$V(\mathcal{A})$, i.e. a completion of $V(\mathcal{A})$ by the formal elements
$[p]-[q]$.  The image of $V(\mathcal{A})$ in  $K_0(\mathcal{A})$ 
is a positive cone $K_0^+(\mathcal{A})$ defining  the order structure $\le$  on the  
abelian group  $K_0(\mathcal{A})$. The pair   $\left(K_0(\mathcal{A}),  K_0^+(\mathcal{A})\right)$
is known as a dimension group of the $C^*$-algebra $\mathcal{A}$. 
The scale $\Sigma(\mathcal{A})$ is the image in $K_0^+(\mathcal{A})$
of the equivalence classes of projections in the $C^*$-algebra $\mathcal{A}$. 
The $\Sigma(\mathcal{A})$ is a generating, hereditary and directed subset 
of  $K_0^+(\mathcal{A})$, i.e. (i) for each $a\in K_0^+(\mathcal{A})$ 
there exist $a_1,\dots, a_r\in\Sigma(\mathcal{A})$ such that 
$a=a_1+\dots+a_r$; (ii) if $0\le a\le b\in \Sigma(\mathcal{A})$, then $a\in\Sigma(\mathcal{A})$
and (iii) given $a,b\in\Sigma(\mathcal{A})$ there exists $c\in\Sigma(\mathcal{A})$,
such that $a,b\le c$.   Each  scale  can always be written as 
$\Sigma(\mathcal{A})=\{a\in K_0^+(\mathcal{A}) ~|~0\le a\le u\}$,
where $u$ is an  order unit of  $K_0^+(\mathcal{A})$.  
The pair  $\left(K_0(\mathcal{A}),  K_0^+(\mathcal{A})\right)$ and the
triple  $\left(K_0(\mathcal{A}),  K_0^+(\mathcal{A}), \Sigma(\mathcal{A})\right)$
are invariants of the Morita equivalence and isomorphism class of the 
$C^*$-algebra $\mathcal{A}$, respectively.

\subsection{Higher dimensional non-commutative tori}
\subsubsection{Definition and properties}
The even-dimensional   non-commutative torus $\mathcal{A}_{\Theta}$ is the universal $C^*$-algebra
generated by the unitary operators $U_1,\dots, U_{2n}$
and satisfying the  relations  $\{U_jU_i=e^{2\pi i\theta_{ij}} U_iU_j ~|~ 1\le i,j\le 2n\}$,
where $\Theta=(\theta_{ij})$ is  a skew-symmetric real matrix (\ref{eq1.2}). 
 Let  $SO(n, n;  ~\mathbf{Z})$ be a subgroup 
 of the group $GL(2n, \mathbf{Z})$ preserving the quadratic form $x_1x_{n+1}+x_2x_{n+2}+\dots+x_nx_{2n}$. 
The matrix  $\left(\small\begin{matrix} A & B\cr C & D\end{matrix}\right)\in SO(n, n;  ~\mathbf{Z})$,
if and only if,   
\begin{equation}\label{eq2.5}
A^TD+C^TB=I,\quad A^TC+C^TA=0=B^TD+D^TB,
\end{equation}
where  $A,B,C,D\in GL (n, \mathbf{Z})$ and $I$ is the identity matrix. 
The noncommutative tori  $\mathcal{A}_{\Theta}$ and $\mathcal{A}_{\Theta'}$ are
Morita equivalent, whenever 
\begin{equation}\label{eq2.6}
\Theta'={A\Theta+B\over C\Theta+D},  \quad\hbox{where}\quad  
\left(
\begin{matrix}
A & B\cr C & D
\end{matrix}
\right)\in SO(n, n;  ~\mathbf{Z}). 
\end{equation}

\medskip
The reader can verify, that  conditions  (\ref{eq2.5})
follow from the matrix equation (\ref{eq2.1}),  but not vice versa. 
In other words, one gets an inclusion $Sp ~(2n, \mathbf{Z})\subseteq SO(n, n;  ~\mathbf{Z})$.
This observation prompts the construction of a functor $F$ from the $n$-dimensional 
abelian varieties $\mathscr{A}$ to the $2n$-dimensional non-commutative 
tori $\mathcal{A}_{\Theta}$,  such that if $\mathscr{A}$ and $\mathscr{A}'$ 
are isomorphic abelian varieties,  the
$\mathcal{A}_{\Theta}=F(\mathscr{A})$ and $\mathcal{A}_{\Theta}'=F(\mathscr{A}')$
will be the Morita equivalent non-commutative tori, see  (\ref{eq2.2}) and (\ref{eq2.6}). 
The restriction of $F$ to the simple abelian varieties $\mathscr{A}_K$ corresponds to the 
non-commutative tori $\mathcal{A}_{\Theta(k)}=F(\mathscr{A}_K)$, where 
$\Theta(k)$ is the matrix over a number field $k$ of  $\deg~(k|\mathbf{Q})=2n$.

\subsubsection{Crossed product $\mathcal{A}_{\Theta (k)}\rtimes_{L_v}\mathbf{Z}$}
Let $\mathcal{A}_{\Theta}$ be a $2n$-dimensional  non-commutative torus
endowed with the canonical trace $\tau:  \mathcal{A}_{\Theta}\to\mathbf{C}$. 
Since $K_0(\mathcal{A}_{\Theta})\cong \mathbf{Z}^{2^{2n-1}}$
and $\tau_*: K_0(\mathcal{A}_{\Theta})\to\mathbf{R}$ is a homomorphism
induced by $\tau$, one gets a $\mathbf{Z}$-module  $\Lambda:=\tau_*(K_0(\mathcal{A}_{\Theta}))\subset\mathbf{R}$
of rank $2^{2n-1}$.  The generators of $\Lambda$ belong to the ring  $\mathbf{Z}[\theta_{ij}]$. 
In what follows, we assume that $\theta_{ij}\in k$.  In this case one gets the algebraic constraints and 
the rank of $\Lambda$ is equal to $2n$
 \cite[Remark 6.6.1]{N}. In other words,  
\begin{equation}\label{eq2.6.5}
 \Lambda\cong \mathbf{Z}+\mathbf{Z}\theta_1+\dots+\mathbf{Z}\theta_{2n-1}, \quad\hbox{where}  ~\theta_i\in k.
 \end{equation}
\begin{definition}\label{dfn2.1}
We shall write $B\in GL(2n, \mathbf{Z})$ to denote a positive matrix,
such that  $(1,\theta_1,\dots,\theta_{2n-1})$ is the normalized 
Perron-Frobenius eigenvector of $B$. 
\end{definition}
Let $\pi(n)$  be an integer-valued function defined in \cite[Section 6.5.3]{N}. 
Consider the characteristic polynomial $Char~\left(B^{\pi(p)}\right)=x^{2n}-a_1x^{2n-1}-\dots-a_{2n-1}x-1$
and a matrix 
\begin{equation}\label{eq2.7}
L_v=
\left(
\begin{matrix}
a_1     &  1    & \dots  & 0 &  0\cr
a_2     & 0      & 1  &  0      & 0\cr
\vdots  & \vdots  & \ddots & \vdots  & \vdots\cr
a_{2n-1} & 0  & \dots &  0 & 1 \cr
p      &  0     & \dots  &    0   & 0
\end{matrix}\right),
\end{equation}
where $p$ is the prime underlying $v$.   The  $L_v$ defines an 
endomorphism of the algebra  $\mathcal{A}_{\Theta(k)}$ by its action on the 
generators $U_1,\dots, U_{2n}$.  We consider the crossed product $C^*$-algebra 
$\mathcal{A}_{\Theta (k)}\rtimes_{L_v}\mathbf{Z}$ associated to such an action. 
\begin{remark}\label{rmk2.1}
There exits an isomorphism:
\begin{equation}\label{eq2.8}
K_0\left(\mathcal{A}_{\Theta (k)}\rtimes_{L_v}\mathbf{Z}\right)\cong \mathscr{A}_{\mathbf{F}_p},
\end{equation}
where  $\mathscr{A}_{\mathbf{F}_p}$ is a  localization of 
 the  $\mathscr{A}_K$ at the prime ideal $\mathscr{P}\subset K$ over $p$
  \cite[Section 6.6]{N}.   Formula (\ref{eq2.8}) hints that    the crossed product 
  $\mathcal{A}_{\Theta (k)}\rtimes_{L_v}\mathbf{Z}$
is  a  recast  of the variety $\mathscr{A}_{K_v}$. 
\end{remark}

\section{Proof of theorem \ref{thm1.1}}
We refer the reader to Section 1 for an outline of the proof. The detailed argument
is given below.  We split the proof in a series of lemmas. 
\begin{lemma}\label{lm3.1}
The scale  $\Sigma\left(\mathcal{A}_{\Theta (k)}\right)$ has the structure of a torsion group, so that 
$WC(\mathscr{A}_K)\cong \Sigma\left(\mathcal{A}_{\Theta (k)}\right)$,
where $\mathcal{A}_{\Theta(k)}=F(\mathscr{A}_K)$.
\end{lemma}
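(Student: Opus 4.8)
The plan is to establish an isomorphism between two torsion abelian groups, $WC(\mathscr{A}_K)$ and $\Sigma(\mathcal{A}_{\Theta(k)})$, in two independent stages: first showing that the scale is a torsion group, and then producing the isomorphism itself. For the torsion claim, I would start from the structure of the scale recalled in Section 2.3: $\Sigma(\mathcal{A}_{\Theta(k)}) = \{a \in K_0^+(\mathcal{A}_{\Theta(k)}) \mid 0 \le a \le u\}$ for an order unit $u$. The crucial input is the hypothesis $\theta_{ij} \in k$, which by \cite[Remark 6.6.1]{N} forces $\Lambda = \tau_*(K_0(\mathcal{A}_{\Theta(k)}))$ to have rank exactly $2n$ rather than $2^{2n-1}$. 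This rank collapse means the trace homomorphism $\tau_*$ has a large kernel, and I expect the torsion of the scale to live precisely in this kernel: the order structure bounds $a$ between $0$ and $u$, so the image $\tau_*(\Sigma)$ is a bounded subset of $\mathbf{R}$, while the $K_0$-group itself is free of large rank. The boundedness of the scale together with the finitely generated, ordered structure of the dimension group should pin down that $\Sigma$, viewed with its induced group-completion structure, is torsion.

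Next I would construct the isomorphism $WC(\mathscr{A}_K) \cong \Sigma(\mathcal{A}_{\Theta(k)})$. The natural bridge is the functor $F$ together with the dictionary between invariants of abelian varieties and invariants of non-commutative tori. The Weil-Ch\^atelet group classifies principal homogeneous spaces $(\mathscr{C}, \mu)$ up to $K$-isomorphism, with the trivial class corresponding to spaces possessing $K$-points. On the non-commutative side, the scale $\Sigma$ classifies equivalence classes of projections in $\mathcal{A}_{\Theta(k)}$ itself, sitting inside $K_0^+$. I would set up a map sending a homogeneous space $\mathscr{C}$ to the class $[p]$ of a projection encoding the failure of $\mathscr{C}$ to have a $K$-point, and verify it respects the group operations: the Baer sum of torsors on one side should match the semigroup addition $[p] + [q] = [p' \oplus q']$ on the other. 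Since $WC(\mathscr{A}_K)$ is already known to be torsion (stated in Section 2.2), compatibility of the torsion structures gives a consistency check on both halves of the lemma simultaneously.

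The main obstacle will be making the correspondence between homogeneous spaces and scale elements both well-defined and bijective, since the functor $F$ is only asserted to send isomorphic varieties to Morita-equivalent tori, and Morita equivalence preserves only the dimension group $(K_0, K_0^+)$, not the finer scale $\Sigma$. The subtlety is that the scale is precisely the extra invariant that distinguishes isomorphism from Morita equivalence, so I must argue that the \emph{twists} recorded by $WC(\mathscr{A}_K)$ — which are exactly the $K$-forms of $\mathscr{A}_K$ that become trivial over an extension — correspond to the different scales realizable within a fixed Morita equivalence class. Concretely, I would exploit that a homogeneous space becomes trivial over a Galois extension $L/K$, mirroring the way a projection's class is measured against the order unit $u$; the torsion order of a class in $WC$ should equal the order of the corresponding scale element. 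I anticipate that reconciling the Galois descent data defining $WC(\mathscr{A}_K)$ with the order-theoretic data defining $\Sigma$ is where the real work lies, and I would lean on the explicit realization of $\Lambda$ in \eqref{eq2.6.5} and the positivity of the matrix $B$ from Definition \ref{dfn2.1} to make this matching canonical rather than merely abstract.
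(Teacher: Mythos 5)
Your second stage is essentially the paper's own argument: the paper likewise observes that a principal homogeneous space $\mathscr{C}\cong\mathscr{E}_K'$ is a twist of $\mathscr{E}_K$, hence is sent by $F$ to a $C^*$-algebra that is Morita equivalent but not isomorphic to $\mathcal{A}_{\Theta(k)}$, and that the scale is exactly the invariant separating isomorphism classes inside a fixed Morita equivalence class; it then notes that every element of $K_0^+$ can serve as an order unit and hence determines a scale, and restricts attention to the generating subset $\Sigma$. So your identification of twists with the different scales realizable within one Morita class is the right bridge, and the point you flag as the remaining difficulty is precisely where the paper, too, is brief.

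The genuine gap is in your first stage. You propose to deduce that $\Sigma(\mathcal{A}_{\Theta(k)})$ is a torsion group from the boundedness $0\le a\le u$ together with the rank collapse of $\Lambda$ to $2n$, locating the torsion ``in the kernel of $\tau_*$.'' This cannot work as stated: $\Sigma$ is a subset of the free abelian group $K_0(\mathcal{A}_{\Theta(k)})$, and for $n=1$ the map $\tau_*$ is injective with image $\mathbf{Z}+\mathbf{Z}\theta$, so there is no kernel to carry torsion; moreover $\tau_*(\Sigma(\mathcal{A}_{\theta}))=[0,1]\cap(\mathbf{Z}+\mathbf{Z}\theta)$ is an infinite, dense subset of $[0,1]$, and boundedness of a subset of a free abelian group confers no torsion-group structure by itself. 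The paper imports the torsion structure from outside: it applies the Minkowski question-mark function $?(x)$, which maps the quadratic irrationalities of $[0,1]$ order-preservingly onto non-dyadic rationals, producing an injection $\Sigma(\mathcal{A}_{\theta})\hookrightarrow\mathbf{Q}/\mathbf{Z}$ and thereby equipping $\Sigma$ with the structure of a subgroup of the torsion group $\mathbf{Q}/\mathbf{Z}$ (with a higher-dimensional analogue of $?(x)$ invoked for $n>1$). Without this embedding into $\mathbf{Q}/\mathbf{Z}$, or some substitute for it, your first stage does not produce a torsion group, and your second stage then has nothing on the $C^*$-algebra side for $WC(\mathscr{A}_K)$ to be isomorphic to as a torsion group.
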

\begin{proof}
(i) For the sake of clarity, we restrict to the case $n=1$, i.e. when the variety 
$\mathscr{A}$ is an elliptic curve $\mathscr{E}$. In this case  matrix (\ref{eq1.2}) 
can be written as: 
\begin{equation}
\Theta=\left(\begin{matrix} 0 &\theta\cr -\theta & 0\end{matrix}\right). 
\end{equation}
We shall denote the corresponding non-commutative torus by $\mathcal{A}_{\theta}$. 
One gets 
\linebreak
 $\deg~(k|\mathbf{Q})=2$, i.e.  $k$ is a real quadratic  field.  
Therefore $F(\mathscr{E}_K)=\mathcal{A}_{\theta}$,  where  $\theta\in k$ is a quadratic irrationality. 

\medskip
To prove that the scale $\Sigma(\mathcal{A}_{\theta})$ has the structure of a torsion group,
we shall use  the Minkowski  question-mark function $?(x)$ [Minkowski 1904] \cite[p. 172]{Min1}.
Such a function is known to map quadratic irrational numbers of the unit interval to the rational 
numbers of the same interval preserving their natural order. This observation implies
 $\Sigma(\mathcal{A}_{\theta})\subset \mathbf{Q}/\mathbf{Z}$, i.e. the scale 
 $\Sigma(\mathcal{A}_{\theta})$ is a subgroup of the torsion group $\mathbf{Q}/\mathbf{Z}$.
Let us pass to a detailed argument. 
  
\medskip
 Let $\tau$ be the canonical trace on the algebra $\mathcal{A}_{\theta}$. Since $K_0(\mathcal{A}_{\theta})\cong \mathbf{Z}^2$,
 one gets $\tau_*(K_0(\mathcal{A}_{\theta}))=\mathbf{Z}+\mathbf{Z}\theta$ and  
 $\tau_*(K_0^+(\mathcal{A}_{\theta}))=\{m+n\theta\ge 0 ~|~m,n\in\mathbf{Z}\}$.
 It is known,  that  $\tau_*(u)=1$, where   $u\in  K_0^+(\mathcal{A}_{\theta})$ is an order unit. Therefore
 the  traces on  the scale  $\Sigma(\mathcal{A}_{\theta})$ are  given by the formula:
 \begin{equation}\label{eq3.2}
 \tau_*(\Sigma(\mathcal{A}_{\theta}))=[0,1]\cap\mathbf{Z}+\mathbf{Z}\theta. 
 \end{equation}

\medskip
Recall that the Minkowski question-mark function is defined by the convergent
series
 \begin{equation}
 ?(x):=a_0+2\sum_{k=1}^{\infty} {(-1)^{k+1}\over 2^{a_1+\dots+a_k}},
 \end{equation}
where $x=[a_0,a_1,a_2,\dots]$ is the  continued fraction of the irrational number $x$. 
The $?(x): [0,1]\to [0,1]$ is a monotone continuous function with the following properties:
(i) $?(0)=0$ and $?(1)=1$; (ii)  $?(\mathbf{Q})=\mathbf{Z}[{1\over 2}]$ are dyadic rationals
and  (iii) $?(\mathscr{Q})=\mathbf{Q}-\mathbf{Z}[{1\over 2}]$, where $\mathscr{Q}$ are quadratic 
irrational numbers  [Minkowski 1904] \cite[p. 172]{Min1}.

\medskip
Consider  subset (\ref{eq3.2}) of the interval $[0,1]$. Since $\theta$ is a quadratic irrationality,
we conclude that the set  $\tau_*(\Sigma(\mathcal{A}_{\theta}))\subset\mathscr{Q}$. 
By property (iii) of the Minkowski question-mark function, one gets an inclusion
$?(\tau_*(\Sigma(\mathcal{A}_{\theta})))\subset \mathbf{Q}/\mathbf{Z}$. Thus 
we constructed a one-to-one map:
\begin{equation}\label{eq3.4}
\Sigma(\mathcal{A}_{\theta})\to\mathscr{Y}\subset \mathbf{Q}/\mathbf{Z},
\end{equation}
where $\mathscr{Y}:=?(\tau_*(\Sigma(\mathcal{A}_{\theta})))$.  It follows  from (\ref{eq3.4}), 
that   $\Sigma(\mathcal{A}_{\theta})$ is  a torsion group. 
\begin{remark}
Formula (\ref{eq3.4}) is part of a duality between the K-theory of 
non-commutative tori and the Galois cohomology of abelian varieties. 
Such a study is beyond the scope of present paper. 
\end{remark}

\medskip
(ii) Let us show,  that  $WC(\mathscr{E}_K)\cong \Sigma(\mathcal{A}_{\theta})$,
where $\cong$ is an isomorphism of the torsion groups. 
Indeed, let $\mathscr{C}$ be the principal homogeneous space of the 
elliptic curve $\mathscr{E}_K$.  It is known, that $\mathscr{C}\cong \mathscr{E}_K'$
is the twist of $\mathscr{E}_K$, i.e. an isomorphic but not $K$-isomorphic elliptic 
curve $\mathscr{E}_K'$.  It follows,  that the  $\mathcal{A}_{\theta}$ and 
$\mathcal{A}_{\theta}'=F(\mathscr{E}_K')$ are  Morita equivalent, but not isomorphic
$C^*$-algebras.  
 This would  imply $\left(K_0(\mathcal{A}_{\theta}),  K_0^+(\mathcal{A}_{\theta})\right)\cong
 \left(K_0(\mathcal{A}_{\theta}'),  K_0^+(\mathcal{A}_{\theta}')\right)$,
 but  $\left(K_0(\mathcal{A}_{\theta}),  K_0^+(\mathcal{A}_{\theta}), \Sigma(\mathcal{A}_{\theta})\right)
 \not\cong\left(K_0(\mathcal{A}_{\theta}'),  K_0^+(\mathcal{A}_{\theta}'), \Sigma(\mathcal{A}_{\theta}')\right)$. 
 In other words, the principal homogeneous spaces of elliptic curve  $\mathscr{E}_K$ are classified 
 by the scales  of the algebra $\mathcal{A}_{\theta}$. 

\medskip
On the other hand, each element of $K_0^+(\mathcal{A}_{\theta})$
can be taken for an order unit $u$ of the dimension group 
 $\left(K_0(\mathcal{A}_{\theta}),  K_0^+(\mathcal{A}_{\theta})\right)$. 
 Since $\Sigma(\mathcal{A}_{\theta})=\{a\in K_0^+(\mathcal{A}_{\theta}) ~|~0\le a\le u\}$,
 we conclude that the elements of  $K_0^+(\mathcal{A}_{\theta})$ classify all 
 scales of the algebra $\mathcal{A}_{\theta}$. We can always restrict to the 
 generating subset $\Sigma(\mathcal{A}_{\theta})\subset K_0^+(\mathcal{A}_{\theta})$,
 since all other elements of the positive cone $K_0^+(\mathcal{A}_{\theta})$ correspond 
 to the finite unions $\mathscr{C}_1\cup\dots\cup\mathscr{C}_k$
 of the generating homogeneous spaces $\mathscr{C}_i$.  It remains to notice, that 
 equivalence classes of the principal homogeneous spaces $(\mathscr{C},\mu)$ 
 coincide with the isomorphism classes of the algebra $\mathcal{A}_{\theta}$. 
 In other words, one gets an isomorphism of the torsion groups:
\begin{equation}\label{eq3.5}
WC(\mathscr{E}_K)\cong \Sigma(\mathcal{A}_{\theta}). 
\end{equation}

\medskip
(iii) The general case $n>1$  is proved by an adaption of the argument for the case $n=1$. 
Notice that  one must use the Perron-Frobenius $n$-dimensional 
continued fractions and a higher-dimensional analog of the 
 Minkowski question-mark function. The details are left to the reader.

\medskip
Lemma \ref{lm3.1} is proved. 
\end{proof}

\begin{lemma}\label{lm3.2}
$WC(\mathscr{A}_{K_v})\cong 
K_0\left(\mathcal{A}_{\Theta (k)}\rtimes_{L_v}\mathbf{Z}\right)$,
where $\mathcal{A}_{\Theta(k)}=F(\mathscr{A}_K)$.
\end{lemma}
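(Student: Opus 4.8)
The plan is to establish the isomorphism $WC(\mathscr{A}_{K_v})\cong K_0\left(\mathcal{A}_{\Theta(k)}\rtimes_{L_v}\mathbf{Z}\right)$ by combining two facts already available in the excerpt: the local analog of Lemma \ref{lm3.1}, which identifies the Weil-Ch\^atelet group with a scale, and the isomorphism (\ref{eq2.8}) of Remark \ref{rmk2.1}, which identifies the $K_0$-group of the crossed product with the local variety $\mathscr{A}_{\mathbf{F}_p}$. First I would invoke the structural principle underlying Lemma \ref{lm3.1}, namely that the principal homogeneous spaces of an abelian variety are classified by the scale of the associated noncommutative torus, and apply it over the completed field $K_v$ rather than $K$. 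The key observation is that, by formula (\ref{eq2.8}), the crossed product $\mathcal{A}_{\Theta(k)}\rtimes_{L_v}\mathbf{Z}$ has $K_0$-group isomorphic to $\mathscr{A}_{\mathbf{F}_p}$, the reduction of $\mathscr{A}_K$ at the prime $\mathscr{P}$ over $p$; the matrix $L_v$ in (\ref{eq2.7}) encodes precisely the Frobenius action at $v$ through the prime $p$ in its bottom row.

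The main steps are as follows. I would begin by producing the local version of the duality $WC(\mathscr{A}_{K_v})\cong\Sigma(\mathcal{A}_{\Theta(k_v)})$, arguing exactly as in part (ii) of Lemma \ref{lm3.1}: a principal homogeneous space over $K_v$ is a twist of $\mathscr{A}_{K_v}$, corresponding to a Morita-equivalent but non-isomorphic torus, so that such spaces are classified by the scale data. Next I would connect the local scale to the crossed-product $K_0$-group. The conceptual content here is that passing to the completion $K_v$ corresponds on the noncommutative side to forming the crossed product by the endomorphism $L_v$: the action of $L_v$ is the dynamical analog of reduction modulo $p$, and the Pimsner-Voiculescu exact sequence computes $K_0$ of the crossed product in terms of the coker and ker of the induced map $1-(L_v)_*$ on $K_0(\mathcal{A}_{\Theta(k)})$. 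I would then match this cokernel with the torsion group describing the local homogeneous spaces, using that the characteristic polynomial $Char\left(B^{\pi(p)}\right)$ governs the relevant $\mathbf{Z}$-module quotient.

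The hard part will be the final identification: showing that the torsion group arising as the scale of the local object coincides, as an abelian group, with $K_0\left(\mathcal{A}_{\Theta(k)}\rtimes_{L_v}\mathbf{Z}\right)$. The natural bridge is Remark \ref{rmk2.1}, since $WC(\mathscr{A}_{K_v})$ should be read off from the geometry of $\mathscr{A}_{\mathbf{F}_p}$, and (\ref{eq2.8}) already delivers $K_0$ of the crossed product as that same local variety. Concretely, one must verify that the group of $K_v$-rational twists of $\mathscr{A}_{K_v}$ is carried isomorphically onto the points of $\mathscr{A}_{\mathbf{F}_p}$ under the functor $F$; this is where the precise definition of $\pi(n)$ and the stabilizing exponent in $B^{\pi(p)}$ must be used to ensure the Frobenius endomorphism $L_v$ correctly reproduces the local arithmetic. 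I would expect the subtle point to be checking that no information is lost or doubled in passing between the scale description and the Pimsner-Voiculescu cokernel, i.e. that the order structure on the local dimension group does not introduce an extra factor, so the isomorphism is genuinely one of torsion abelian groups rather than merely a surjection.
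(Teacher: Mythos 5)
Your proposal diverges from the paper's argument and, more importantly, omits the one input that actually makes the lemma work: Tate's local duality theorem. The paper's proof does not construct any local analog of Lemma \ref{lm3.1}. Instead it quotes [Borel \& Serre 1964] for the finiteness of $WC(\mathscr{A}_{K_v})$ and [Tate 1958] for the isomorphism $WC(\mathscr{A}_{K_v})\cong(\widehat{\mathscr{A}}_{K_v})^*$, the character group of the $K_v$-points of the dual abelian variety. It then invokes Remark \ref{rmk2.1} to write $K_0\left(\mathcal{A}_{\Theta(k)}\rtimes_{L_v}\mathbf{Z}\right)\cong\mathscr{A}(\mathbf{F}_p)$, uses Hensel's lemma to identify the points of $\mathscr{A}(\mathbf{F}_p)$ with the rational points of $\mathscr{A}_{K_v}$, notes that the torsion points of $\mathscr{A}_{K_v}$ and $\widehat{\mathscr{A}}_{K_v}$ coincide so that $\widehat{\mathscr{A}}_{K_v}\cong\mathscr{A}(\mathbf{F}_p)$, and finally uses Pontryagin self-duality of finite abelian groups to remove the character-group operation. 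Each of these steps is doing real work, and none of them appears in your outline.

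The gap in your plan is concrete: you never produce an identification of $WC(\mathscr{A}_{K_v})$ with anything computable. You propose to ``argue exactly as in part (ii) of Lemma \ref{lm3.1}'' over $K_v$, but the functor $F$ is only defined on abelian varieties over number fields (via the Siegel upper half-space and the inclusion $Sp(2n,\mathbf{Z})\subseteq SO(n,n;\mathbf{Z})$); there is no object $\mathcal{A}_{\Theta(k_v)}$ in the paper and no local scale available to classify the $K_v$-twists. Even granting such an object, your assertion that ``the group of $K_v$-rational twists of $\mathscr{A}_{K_v}$ is carried isomorphically onto the points of $\mathscr{A}_{\mathbf{F}_p}$'' is precisely Tate's duality statement in disguise: $WC(\mathscr{A}_{K_v})$ is dual to the group of $K_v$-points of the \emph{dual} variety, not equal to the group of points of the reduction, and this cannot be extracted from Hensel's lemma or from the Pimsner--Voiculescu sequence. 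The Pimsner--Voiculescu computation is also redundant here, since formula (\ref{eq2.8}) is already taken as given in Remark \ref{rmk2.1}; what your argument is missing is the arithmetic duality on the other side of the isomorphism, together with the finiteness and self-duality facts needed to turn that duality into the stated isomorphism of groups.
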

\begin{proof}
The abelian group $WC(\mathscr{A}_{K_v})$ is known to be finite,
see e.g. [Borel \& Serre 1964]  \cite{BorSer1}.  Specifically,
\begin{equation}\label{eq3.6}
WC(\mathscr{A}_{K_v})\cong 
(\widehat{\mathscr{A}}_{K_v})^*, 
\end{equation}
where $\widehat{\mathscr{A}}_{K_v}$  are the rational points of $K_v$ on the dual abelian
variety of $\mathscr{A}_{K_v}$ and $(\widehat{\mathscr{A}}_{K_v})^*$
is the character group of $\widehat{\mathscr{A}}_{K_v}$ [Tate 1958] \cite{Tat1}. 

\medskip
Recall that  $K_0\left(\mathcal{A}_{\Theta (k)}\rtimes_{L_v}\mathbf{Z}\right)\cong\mathscr{A}(\mathbf{F}_p)$,
 where $\mathcal{A}_{\Theta(k)}=F(\mathscr{A}_K)$ and  $\mathscr{A}(\mathbf{F}_p)$ is a localization of 
 the abelian variety $\mathscr{A}_K$ at the prime ideal $\mathscr{P}$ over prime $p$,
 see remark \ref{rmk2.1}. 
In view of the Hensel  Lemma, the points of $\mathscr{A}(\mathbf{F}_p)$ are 
the rational points of $\mathscr{A}_{K_v}$, where $v$ is the place over prime $p$. 
Since the torsion points of $\mathscr{A}_{K_v}$ and the torsion points of 
$\widehat{\mathscr{A}}_{K_v}$ coincide, we conclude that 
$\widehat{\mathscr{A}}_{K_v}\cong \mathscr{A}(\mathbf{F}_p)$.  
On the other hand, since finite abelian groups are Pontryagin self-dual,
one gets $(\widehat{\mathscr{A}}_{K_v})^*\cong  \mathscr{A}(\mathbf{F}_p)$. 
Therefore isomorphism (\ref{eq3.6}) can be written as:
\begin{equation}\label{eq3.7}
WC(\mathscr{A}_{K_v})\cong 
K_0\left(\mathcal{A}_{\Theta (k)}\rtimes_{L_v}\mathbf{Z}\right).  
\end{equation}
Lemma \ref{lm3.2} is proved. 
\end{proof}

\begin{corollary}\label{cor3.2}
$\prod_v 
WC(\mathscr{A}_{K_v})\cong \prod_v 
K_0\left(\mathcal{A}_{\Theta (k)}\rtimes_{L_v}\mathbf{Z}\right)$.
\end{corollary}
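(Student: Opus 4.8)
The plan is to deduce Corollary \ref{cor3.2} directly from Lemma \ref{lm3.2} by taking products over all places $v$. The essential content has already been established: Lemma \ref{lm3.2} gives, for each individual place $v$, an isomorphism $WC(\mathscr{A}_{K_v})\cong K_0\left(\mathcal{A}_{\Theta (k)}\rtimes_{L_v}\mathbf{Z}\right)$. Since this holds for every $v$, the natural first step is simply to form the direct product of these isomorphisms across the index set $\{v\}$ of all places of $K$.

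\begin{proof}
By lemma \ref{lm3.2}, for each place $v$ of the number field $K$ there is an isomorphism of abelian groups
\begin{equation}\label{eq3.8}
\varphi_v: WC(\mathscr{A}_{K_v})\xrightarrow{\ \sim\ } K_0\left(\mathcal{A}_{\Theta (k)}\rtimes_{L_v}\mathbf{Z}\right).
\end{equation}
The product homomorphism $\prod_v \varphi_v$ is an isomorphism between the direct products, since a product of isomorphisms is an isomorphism componentwise: its inverse is given by $\prod_v \varphi_v^{-1}$. Hence
\begin{equation}\label{eq3.9}
\prod_v WC(\mathscr{A}_{K_v})\cong \prod_v K_0\left(\mathcal{A}_{\Theta (k)}\rtimes_{L_v}\mathbf{Z}\right),
\end{equation}
which is the desired statement. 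Corollary \ref{cor3.2} is proved.
\end{proof}

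The step requiring care is whether the collection of pointwise isomorphisms \eqref{eq3.8} assembles into a well-defined isomorphism of the products, and this is where I would be most cautious. Taking products is functorial and preserves isomorphism, so no obstruction arises at the purely algebraic level; the inverse map $\prod_v \varphi_v^{-1}$ is manifestly a two-sided inverse. The one point worth flagging is that the homomorphism \eqref{eq1.1} defining $\Sha(\mathscr{A}_K)$ is sometimes restricted to a \emph{restricted} direct product (the elements that are trivial at all but finitely many places), rather than the full Cartesian product, so if one wished to identify $\Sha$ as a kernel later one should check that the isomorphisms $\varphi_v$ are compatible with the restriction condition; for the bare statement of the corollary as written, however, the full product suffices and no such compatibility is needed.
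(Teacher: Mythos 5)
Your proof is correct and follows essentially the same route as the paper: both deduce the corollary immediately from Lemma \ref{lm3.2} by assembling the place-by-place isomorphisms over all $v$. The only cosmetic difference is that the paper phrases the assembly as a direct sum of the finite groups $WC(\mathscr{A}_{K_v})$ over the non-archimedean places, while you take the full Cartesian product — a distinction you already flag yourself, and one that does not affect the statement as written.
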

\begin{proof}
One takes the  direct sum of finite groups $WC(\mathscr{A}_{K_v})$
over all non-archimedean places $v$ of the field $K$.  
The corollary follows from isomorphism (\ref{eq3.7}).  
\end{proof}

\begin{lemma}\label{lm3.3}
$$\Sha (\mathscr{A}_K)\cong
\begin{cases}
Cl~(\Lambda)\oplus Cl~(\Lambda), & \mbox{if $k$ is even,}  \cr
\left(\mathbf{Z}/2^k\mathbf{Z}\right) \oplus Cl_{~\mathbf{odd}}~(\Lambda)\oplus  Cl_{~\mathbf{odd}}~(\Lambda),  & \mbox{if $k$ is odd.} 
\end{cases}$$
\end{lemma}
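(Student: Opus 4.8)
The plan is to fuse the three preceding results into a single computation of the kernel of (\ref{eq1.3}), and then to read off the group structure of that kernel from the Latimer--MacDuffee correspondence together with a pairing coming from K-homology. First I would use Lemma \ref{lm3.1} to replace the left-hand side of the defining map (\ref{eq1.1}) by the scale $\Sigma(\mathcal{A}_{\Theta(k)})$, and Corollary \ref{cor3.2} to replace the right-hand side by $\prod_v K_0(\mathcal{A}_{\Theta(k)}\rtimes_{L_v}\mathbf{Z})$. This identifies $\Sha(\mathscr{A}_K)$ with the kernel of the map (\ref{eq1.3}), reducing the whole problem to a statement about two invariants of the positive matrix $B$ of Definition \ref{dfn2.1}.

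The structural heart of the argument, already signalled in Section 1, is an asymmetry in what the two sides of (\ref{eq1.3}) retain about $B$. The scale $\Sigma(\mathcal{A}_{\Theta(k)})$ is an isomorphism invariant of $\mathcal{A}_{\Theta(k)}$, hence depends on the full $GL(2n,\mathbf{Z})$-similarity class of $B$; the factors $K_0(\mathcal{A}_{\Theta(k)}\rtimes_{L_v}\mathbf{Z})$, built from the companion-type matrices $L_v$ of (\ref{eq2.7}), depend only on $\mathrm{Char}(B)$. Thus an element of the kernel records exactly a failure of the characteristic polynomial to determine the similarity class. I would make this precise by exhibiting the kernel as a bijection with the set of integral similarity classes sharing the fixed characteristic polynomial $\mathrm{Char}(B)$. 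By the theorem of Latimer and MacDuffee \cite{LatMac1}, this set is finite and in canonical bijection with the ideal class group $Cl(\Lambda)$ of the order $\Lambda\subset k=\mathbf{Q}(\lambda_B)$ generated by the Perron--Frobenius eigenvalue $\lambda_B$; checking that the correspondence respects addition would then identify the kernel, as an abstract group, with $Cl(\Lambda)$.

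The remaining and most delicate point is to explain both the doubling and the parity dichotomy. Here I would invoke the Atiyah index pairing between $K_0(\mathcal{A}_{\Theta(k)})$ and the K-homology $K^0(\mathcal{A}_{\Theta(k)})$. Transposition of $B$ fixes $\mathrm{Char}(B)$ but inverts the Latimer--MacDuffee ideal class, so the pairing couples the K-theoretic copy of $Cl(\Lambda)$ with a dual K-homological copy; combined with the Pontryagin self-duality of finite abelian groups already used in Lemma \ref{lm3.2}, this upgrades the kernel to the hyperbolic double $Cl(\Lambda)\oplus Cl(\Lambda)$, a non-commutative incarnation of the Cassels--Tate pairing on $\Sha$. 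When this pairing is genuinely alternating, the symplectic splitting into two isotropic halves is clean and gives the first line of the statement.

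The hard part is the $2$-primary analysis. Writing $Cl(\Lambda)\cong(\mathbf{Z}/2^k\mathbf{Z})\oplus Cl_{\mathbf{odd}}(\Lambda)$, the restriction of the pairing to the odd part is automatically alternating and contributes $Cl_{\mathbf{odd}}(\Lambda)\oplus Cl_{\mathbf{odd}}(\Lambda)$ in either case. On the $2$-part the pairing is alternating precisely when $k$ is even; when $k$ is odd a generator of order $2^k$ pairs non-trivially with itself, so the $2$-part fails to split off a second copy and persists as the single factor $\mathbf{Z}/2^k\mathbf{Z}$, yielding the second line. I expect the main obstacle to be showing that this self-pairing of the $2$-torsion is governed exactly by the parity of $k$, since it requires tracking the quadratic refinement of the Atiyah pairing through the Latimer--MacDuffee identification and controlling the associated Arf-type invariant.
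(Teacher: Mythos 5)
Your proposal follows essentially the same route as the paper: reduce $\Sha(\mathscr{A}_K)$ to the kernel of (\ref{eq3.8}) via Lemma \ref{lm3.1} and Corollary \ref{cor3.2}, exploit the asymmetry that the left-hand side sees the $GL(2n,\mathbf{Z})$-similarity class of $B$ while the right-hand side sees only $\mathrm{Char}(B)$, identify the kernel with $Cl~(\Lambda)$ by Latimer--MacDuffee, and derive both the doubling and the $2$-primary dichotomy from the Atiyah pairing between $K_0$ and the K-homology $K^0$. The only divergence is in the final $p=2$ step, where you attribute the dichotomy to an Arf-type self-pairing of the order-$2^k$ generator while the paper argues that no alternating form exists over $\mathbf{F}_2$ and then appeals to the Basis Theorem; both treatments leave the dependence on the parity of $k$ at essentially the same level of detail.
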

\begin{proof}
For the sake of clarity, let us outline the main idea. 
In view of lemma \ref{lm3.1} and corollary \ref{cor3.2}, 
one can replace (\ref{eq1.1}) by a group homomorphism:
\begin{equation}\label{eq3.8}
\Sigma\left(\mathcal{A}_{\Theta (k)}\right)\rightarrow  \prod_v 
K_0\left(\mathcal{A}_{\Theta (k)}\rtimes_{L_v}\mathbf{Z}\right). 
\end{equation}
To calculate the kernel $\Sha (\mathscr{A}_K)$ of (\ref{eq3.8}),  
recall that the algebra $\mathcal{A}_{\Theta (k)}$ at the LHS of (\ref{eq3.8})
depends on the similarity class of a single matrix $B\in GL (2n, \mathbf{Z})$,
see definition \ref{dfn2.1}.  On the other hand, the RHS of (\ref{eq3.8})
depends solely  on the characteristic polynomial of $B$. 
Roughly speaking, the problem of kernel of  (\ref{eq3.8}) 
reduces to an old question of the  linear algebra:
how many non-similar matrices  with  the same characteristic polynomial are there in
$GL (2n, \mathbf{Z})$? This problem was solved by  [Latimer \& MacDuffee 1933]
\cite{LatMac1}.  Their theorem says that the similarity classes of matrices 
$B\in GL (2n, \mathbf{Z})$ with the fixed polynomial $Char~(B)$ are in one-to-one 
correspondence with the ideal classes $Cl~(\Lambda)$ of an order $\Lambda$
of a number field $k$ generated by the eigenvalues of $B$. 
Coupled with the fact, that the scale $\Sigma\left(\mathcal{A}_{\Theta (k)}\right)$
contains the scales corresponding to all similarity classes, the  Latimer-MacDuffee Theorem
implies that the (half of) kernel of (\ref{eq3.8}) is  isomorphic  to   the group  $Cl~(\Lambda)$. 
The rest of the proof follows from the Atiyah pairing between the K-theory and  the K-homology. 
We pass to a step by step argument.

\bigskip
(i)  We  use notation of Sections 2.3  and  2.4.2. 
Recall that the positive cone $K_0^+(\mathcal{A}_{\Theta(k)})$  and 
the scale $\Sigma\left(\mathcal{A}_{\Theta (k)}\right)$ 
can be recovered  from (\ref{eq2.6.5})  by solving the inequality $\Lambda\ge 0$ and  
$0\le\Lambda\le 1$, respectively. 
Since $\theta_i\in k$ are components of the normalized Perron-Frobenius eigenvector
of a matrix $B\in GL(2n, \mathbf{Z})$, we conclude that the similarity class of $B$ 
defines the algebra $\mathcal{A}_{\Theta(k)}$ up to an isomorphism, and vice versa. 
The set of all pairwise non-similar matrices with the characteristic polynomial $Char ~(B)$ 
will be denoted by $B_1,\dots, B_h$.

\medskip
(ii) Let us show, that $\Lambda_1\subset\dots\subset\Lambda_h$,  where $\Lambda_i$ 
are $\mathbf{Z}$-modules corresponding to the matrices $B_i$. 
Indeed, since $B_i$ has the same characteristic polynomial as $B$, we conclude
that the components of the normalized Perron-Frobenius eigenvector of $B_i$
must lie in the number field $k$. Therefore $\Lambda_i$ is a full $\mathbf{Z}$-module
in the field $k$. It is well known, that set $\{\Lambda\}_{i=1}^h$ of such modules can be ordered  by 
inclusion  $\Lambda_1\subset\dots\subset\Lambda_h$, 
where $\Lambda_h$ is the maximal $\mathbf{Z}$-module. 
\begin{remark}\label{rmk3.6}
The inclusion of scales $\Sigma_1\subset\dots\subset\Sigma_h$ follows from the 
inclusion $\Lambda_1\subset\dots\subset\Lambda_h$ and the inequality
$0\le\Lambda_i\le 1$. 
\end{remark}

\medskip
(iii) Let us prove that the matrix $L_v$ given by formula (\ref{eq2.7}) 
does not depend on $\{B_i ~|~1\le i\le h\}$. 
Indeed,  the  $a_k$ in (\ref{eq2.7}) are coefficients
of the characteristic polynomial of the  matrix $B^{\pi (p)}$,  where 
$\pi (p)$ is a positive integer.  Notice that the spectrum 
$Spec ~(B_i)=\{\lambda_1^i,\dots,\lambda_{2n}^i\}$ does not depend 
on the matrix $B_i$, since $Char~(B_i)$ does not vary. 
Because  $Spec ~\left(B_i^{\pi (p)}\right)=\{(\lambda_1^i)^{\pi (p)},\dots,(\lambda_{2n}^i)^{\pi (p)}\}$,
we conclude that the spectrum of the matrix $B_i^{\pi (p)}$ is independent of $B_i$. 
Therefore the polynomial $Char~\left(B_i^{\pi (p)}\right)$ 
and  the coefficients $a_k$ in (\ref{eq2.7}) are the same for all matrices $B_i$. 
It follows from (\ref{eq2.7}), that  $L_v$ is independent of the choice of matrix $\{B_i ~|~1\le i\le h\}$.

\bigskip
(iv)  Let us calculate the kernel of homomorphism (\ref{eq3.8}).  Without loss of generality,
we assume $B\cong B_h$. By remark \ref{rmk3.6}, we have the inclusion of torsion groups:
\begin{equation}\label{eq3.9}
\Sigma_1\subset\dots\subset\Sigma_{h-1}\subset\Sigma\left(\mathcal{A}_{\Theta (k)}\right). 
\end{equation}
Let {\bf e} $\in \prod_v K_0\left(\mathcal{A}_{\Theta (k)}\rtimes_{L_v}\mathbf{Z}\right)$
be the trivial element of  torsion group at the RHS of (\ref{eq3.8}). 
From item (iii),  it is known that the preimage of {\bf e} under the homomorphism (\ref{eq3.8}) 
consists of $h$ distinct elements of the group   $\Sigma\left(\mathcal{A}_{\Theta (k)}\right)$. 
Indeed, each $\Sigma_i$ in (\ref{eq3.9}) has a unique such element lying in $\Sigma_i\backslash\Sigma_{i-1}$, 
since otherwise the corresponding abelian variety would have two different reductions modulo $p$. 
We conclude therefore, that  the kernel of homomorphism (\ref{eq3.8}) is an abelian group of order $h$. 

\bigskip
(v) Let us calculate the Shafarevich-Tate group  $\Sha (\mathscr{A}_K)$.  
It follows from item (iv), that $h=|Cl~(\Lambda)|$, where $Cl~(\Lambda)$
is the ideal class group of $\Lambda$, see   [Latimer \& MacDuffee 1933]
\cite{LatMac1}.  It is easy to see, that  the kernel of (\ref{eq3.8}) is isomorphic 
to the group $Cl~(\Lambda)$.  
To  express $\Sha (\mathscr{A}_K)$ in terms of  the group $Cl~(\Lambda)$, 
recall that the K-homology is  dual  to the K-theory  [Blackadar 1986] \cite[Section 16.3]{B}. 
Roughly speaking, the cocycles in $K$-theory are represented by the vector bundles.  Atiyah proposed using elliptic operators to
 represent the K-homology cycles.  An elliptic operator can be twisted by a vector bundle, and the Fredholm index of the 
 twisted operator defines a  pairing between the K-homology and the K-theory.
 In particular, $K^0(\mathcal{A}_{\Theta (k)})\cong K_0(\mathcal{A}_{\Theta (k)})$,  where 
$K^0(\mathcal{A}_{\Theta (k)})$ is the zero $K$-homology group of  the
the algebra $\mathcal{A}_{\Theta (k)}$.  
By analogy with  (\ref{eq3.4}), one gets
$\Sigma_0(\mathcal{A}_{\Theta (k)}),  \Sigma^0(\mathcal{A}_{\Theta (k)})\hookrightarrow\mathbf{Q}/\mathbf{Z}$
and, therefore, a pair of embeddings:
\begin{equation}\label{eq3.10}
Cl~(\Lambda_0),  ~Cl~(\Lambda^0)\hookrightarrow\mathbf{Q}/\mathbf{Z}. 
\end{equation}
Since $Cl~(\Lambda_0)\cong Cl~(\Lambda^0)$ are finite abelian groups,
the Atiyah pairing gives rise to a bilinear form $Q_{\mathbf{F}_p}(x,y)$ over the finite field $\mathbf{F}_p$
for each prime $p$ dividing $|Cl~(\Lambda)|$. 
The  (\ref{eq3.10}) is a group homomorphism, if and only if, the  $Q_{\mathbf{F}_p}(x,y)$
is  an alternating form, i.e. $\{Q_{\mathbf{F}_p}(x,x)=0 ~|~\forall x\in Cl~(\Lambda_0)\oplus Cl~(\Lambda^0)\}$.
Recall that the alternating bilinear forms  $Q_{\mathbf{F}_p}(x,y)$  exist, if and only if,  $p\ne 2$.  
Namely,  the form $Q_{\mathbf{F}_2}(x,y)$ is always symmetric, i.e. $Q_{\mathbf{F}_2}(x,x)\ne 0$ unless $x=0$. 
Thus there are no Atiyah pairing in the case $p=2$. The rest of the proof follows the Basis Theorem
for finite abelian groups.  Lemma \ref{lm3.3} is proved. 
\end{proof}

\begin{remark}
Both cases of lemma \ref{lm3.3} are realized by the concrete abelian varieties,
see  [Rubin 1989] \cite[Examples (B) and (C)]{Rub1} and [Poonen \& Stoll 1999] \cite[Proposition 27]{PooSto1},
respectively.
 \end{remark}

\bigskip
Theorem \ref{thm1.1} follows from lemma \ref{lm3.3}.

\section{Complex multiplication}
The  $\Sha (\mathscr{A}_K)$ can be calculated using  theorem \ref{thm1.1},
if the functor $F$ is  given explicitly.  To illustrate the idea, 
we consider the abelian varieties with complex multiplication.

\subsection{Abelian varieties}
Denote by  {\bf K}  a CM-field, i.e. the totally imaginary quadratic 
extension of a totally real number field {\bf k}.  
The abelian variety $\mathscr{A}_{CM}$ is said to have complex 
multiplication, if the endomorphism ring of $\mathscr{A}_{CM}$ contains 
the field {\bf K}, i.e. {\bf K}  $\subset End~\mathscr{A}_{CM}\otimes\mathbf{Q}$.  
The $deg ~(${\bf K} $|~\mathbf{Q})=2n$, where $n$ is the complex dimension of the 
$\mathscr{A}_{CM}$.  For a canonical basis in {\bf K} the lifting of the Frobenius 
endomorphism has the matrix form:
\begin{equation}\label{eq4.1}
Fr_v=
\left(
\begin{matrix}
a_1     &  1    & \dots  & 0 &  0\cr
-a_2     & 0      & 1  &  0      & 0\cr
\vdots  & \vdots  & \ddots & \vdots  & \vdots\cr
a_{2n-1} & 0  & \dots &  0 & 1 \cr
-p      &  0     & \dots  &    0   & 0
\end{matrix}\right).
\end{equation}
The functor $F$ is acting by the formula
$Fr_v\mapsto L_v$,  where $L_v$ is given by (\ref{eq2.7}),
see  \cite[p. 179]{N} for the details.  Since $L_v\in End~\Lambda$, one can recover 
the ring $\Lambda$ from the eigenvalues of matrix $L_v$. 
Let us consider  the simplest case $n=1$, i.e. the elliptic curves 
with complex multiplication.

\subsection{Elliptic curves}
Denote by $\mathscr{E}_{CM}$ an elliptic curve with complex multiplication 
by the ring $R=\mathbf{Z}+fO_{\mathbf{K}}$, where {\bf K} $=\mathbf{Q}(\sqrt{-D})$ 
is an imaginary quadratic field with the square free discriminant $D>1$  and $f\ge 1$
is the conductor of $R$.  The ring $\Lambda=F(R)$ is given by the formula
$\Lambda=\mathbf{Z}+f'O_k$, where $k=\mathbf{Q}(\sqrt{D})$ is the real quadratic 
field and the conductor $f'\ge 1$ is the least integer solution of the  equation
$|Cl~(\mathbf{Z}+f'O_k)|=|Cl~(R)|$, see \cite[Theorem 1.4.1]{N}. 
Moreover,  there exists a group isomorphism $Cl~(\Lambda)\cong Cl~(R)$.
Using  theorem \ref{thm1.1},  one gets the following corollary.
\begin{corollary}\label{cor4.1}
 $\Sha (\mathscr{E}_{CM})\cong  Cl~(R)\oplus Cl~(R)$. 
\end{corollary}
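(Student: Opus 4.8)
The plan is to specialize Theorem~\ref{thm1.1} to the elliptic curve $\mathscr{E}_{CM}$ and then to rewrite the outcome through the group isomorphism $Cl~(\Lambda)\cong Cl~(R)$ recorded in Section 4.2. Since $\mathscr{E}_{CM}$ is a simple abelian variety of complex dimension $n=1$, Theorem~\ref{thm1.1} applies with $2n=2$, with $\Lambda=\mathbf{Z}+f'O_k$ the order in the real quadratic field $k=\mathbf{Q}(\sqrt{D})$, and with the localization data $L_v$ obtained from $Fr_v$ via the rule $Fr_v\mapsto L_v$ of Section 4.1. To keep the notation of Section 4 intact, I will write $Cl~(\Lambda)\cong(\mathbf{Z}/2^{\nu}\mathbf{Z})\oplus Cl_{~\mathbf{odd}}~(\Lambda)$ for the integer that appears as $k$ in the statement of Theorem~\ref{thm1.1}; its two branches return $Cl~(\Lambda)\oplus Cl~(\Lambda)$ when $\nu$ is even and $(\mathbf{Z}/2^{\nu}\mathbf{Z})\oplus Cl_{~\mathbf{odd}}~(\Lambda)\oplus Cl_{~\mathbf{odd}}~(\Lambda)$ when $\nu$ is odd.

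The crux is to show that $\mathscr{E}_{CM}$ falls into the first (even) branch. First I would invoke the classical fact that the Shafarevich--Tate group of an elliptic curve, when finite, carries Cassels' nondegenerate alternating pairing and hence has square order, see \cite{Cas1}. In the odd branch the predicted order is $2^{\nu}\,|Cl_{~\mathbf{odd}}~(\Lambda)|^{2}$ with $\nu$ odd and $|Cl_{~\mathbf{odd}}~(\Lambda)|$ odd, which equals twice a square and is therefore never a perfect square. This is incompatible with the square-order property, so the odd branch cannot occur for $\mathscr{E}_{CM}$. This is exactly why the non-square (``twice a square'') values cited after Lemma~\ref{lm3.3} via \cite{PooSto1} are a strictly higher-dimensional phenomenon and are absent for elliptic curves, whereas the even branch is the one realized by Rubin's complex-multiplication examples \cite{Rub1}.

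It then remains to evaluate the even branch. Here Theorem~\ref{thm1.1} yields $\Sha (\mathscr{E}_{CM})\cong Cl~(\Lambda)\oplus Cl~(\Lambda)$. Substituting the isomorphism $Cl~(\Lambda)\cong Cl~(R)$ from Section 4.2 --- valid because $\Lambda=\mathbf{Z}+f'O_k$ is defined as $F(R)$ with the conductor $f'\ge 1$ chosen as the least integer solution of $|Cl~(\mathbf{Z}+f'O_k)|=|Cl~(R)|$ --- gives $\Sha (\mathscr{E}_{CM})\cong Cl~(R)\oplus Cl~(R)$, as asserted. Note that the resulting order $|Cl~(R)|^{2}$ is automatically a perfect square, consistently with the input of the previous paragraph.

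The main obstacle is precisely the branch selection of the second paragraph: the whole corollary rests on knowing that $\mathscr{E}_{CM}$ lands in the even case of Theorem~\ref{thm1.1}, and this is the one genuinely nonformal step. I would isolate it and justify it through the square-order theorem for elliptic curves. A self-contained alternative, staying inside the paper's framework, would be to verify directly that the exponent $\nu$ attached to $Cl~(\Lambda)\cong Cl~(R)$ is even; this amounts to the same arithmetic input and is the more delicate route, so the square-order argument is preferable. Once the even branch is secured, the remaining steps are the purely formal substitution carried out above.
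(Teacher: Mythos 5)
Your substitution step is exactly what the paper does: Section 4.2 constructs $\Lambda=F(R)=\mathbf{Z}+f'O_k$ with $Cl(\Lambda)\cong Cl(R)$ and then simply writes ``Using theorem \ref{thm1.1}, one gets the following corollary,'' with no further argument. Where you genuinely depart from the paper is the branch selection. The paper never explains why the even case of Theorem \ref{thm1.1} is the one that applies to $\mathscr{E}_{CM}$; it silently asserts the even-branch formula. Your argument --- Cassels' alternating pairing forces $|\Sha(\mathscr{E}_{CM})|$ to be a perfect square, while the odd branch would give order $2^{\nu}\,|Cl_{~\mathbf{odd}}(\Lambda)|^{2}$, i.e.\ twice a square --- is a legitimate way to exclude the odd branch once Theorem \ref{thm1.1} is granted, and it is more careful than the paper at precisely the corollary's only nontrivial step. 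What each version buys: the paper's is shorter but leaves the case analysis unsupported; yours closes that gap by importing one classical external fact.

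One caveat you should make explicit. In Theorem \ref{thm1.1} the branch is determined by the parity of $\nu$, where $2^{\nu}$ is the order of the (assumed cyclic) $2$-Sylow subgroup of $Cl(\Lambda)\cong Cl(R)$ --- an intrinsic invariant of the order $R$, not of $\Sha$. Your Cassels argument therefore proves, as a byproduct, that this exponent is even for \emph{every} imaginary quadratic order $R$ arising as the CM ring of an elliptic curve over a number field. That is a strong arithmetic assertion which fails in general (e.g.\ $Cl(R)\cong\mathbf{Z}/2\mathbf{Z}$ occurs, as for $R=O_{\mathbf{Q}(\sqrt{-5})}$), so your argument does not so much select a branch as expose a tension between Theorem \ref{thm1.1} and the square-order theorem: if $\nu$ is odd for some such $R$, the contradiction you derive falls on the theorem, not on the branch hypothesis. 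You should either restrict the corollary to those $R$ with even exponent or flag this explicitly. The paper sidesteps the issue entirely because both of its worked examples have trivial $Cl(R)$, where the two branches coincide.
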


\medskip
\begin{example}
{\bf ([Rubin 1989] \cite[Example (B)]{Rub1}})
Let $\mathscr{E}_{CM}$ be the Fermat cubic $x^3+y^3=z^3$. 
The  $\mathscr{E}_{CM}$  has complex multiplication by the ring 
$R\cong \mathbf{Z}+O_{\mathbf{K}}$, where {\bf K} $\cong\mathbf{Q}(\sqrt{-3})$.
It is well known, that in this case $Cl~(R)$ is trivial.  We conclude from corollary 
\ref{cor4.1}, that the $\Sha (\mathscr{E}_{CM})$ is a trivial group. 
An alternative proof of this fact is based on the exact calculation of the $p$-part of 
 $\Sha (\mathscr{E}_{CM})$ and can be found in [Rubin 1989] \cite[Theorem 1 and Example (B)]{Rub1}. 
\end{example}
\begin{example}
{\bf ([Rubin 1989] \cite[Example (C)]{Rub1}})
Let $\mathscr{E}_{CM}$ be the modular curve $X_0(49)$ given by the equation
 $y^2+xy=x^3-x^2-2x-1$. 
The  $\mathscr{E}_{CM}$  has complex multiplication by the ring 
$R\cong \mathbf{Z}+O_{\mathbf{K}}$, where {\bf K} $\cong\mathbf{Q}(\sqrt{-7})$.
It is well known, that the group $Cl~(R)$ is trivial. Using corollary \ref{cor4.1},  one concludes 
that the $\Sha (\mathscr{E}_{CM})$ is a trivial group. 
A different proof of this result  can be found in
  [Rubin 1989] \cite[Theorem 1 and Example (C)]{Rub1}. 
\end{example}
\begin{remark}
Theorem \ref{thm1.1} is true for the simple abelian varieties $\mathscr{A}_K$. If  $\mathscr{A}_K$
 is not simple, then the functor $F$ splits and one gets  different formulas for the  group $\Sha (\mathscr{A}_K)$,
 see the corresponding calculations in the excellent paper  [Stein 2004] \cite[Theorem 3.1]{Ste1}.
\end{remark}

\bibliographystyle{amsalpha}


\end{document}